\newcommand{\iid}{\operatorname{\stackrel{i.i.d.}{\sim}}}
\newcommand{\cov}{\mbox{\rm Cov}}
\newcommand{\tr}{\mbox{\rm trace}}
\DeclareMathOperator*{\argmin}{\mbox{\rm argmin}}
\newtheorem{ConjN}{Conjecture}
\newtheorem{Def}{Definition}
\newtheorem{Th}[Def]{Theorem}
\newtheorem{As}[Def]{Assumptions}
\newtheorem{Lem}[Def]{Lemma}
\newcommand{\cD}{\mathcal{D}}
\newcommand{\cH}{\mathcal{H}}
\newcommand{\cN}{\mathcal{N}}
\newcommand{\Prb}{\mathbb P}
\newcommand{\diag}{\mbox{\rm diag}}
\newcommand{\inD}{\operatorname{\stackrel{\cD}{\to}}}
\newcommand{\mun}{\widehat{\mu}_n}
\newcommand{\xn}{\widehat{x}_n}
\newcommand{\cut}{\textnormal{Cut}}
\newcommand{\EE}{\mathbb{E}}
\newcommand{\NN}{\mathbb{N}}
\newcommand{\fm}{\mathfrak{m}}
\newcommand\KK{\textbf{K}}
\newcommand\dd{\textbf{d}}
\begin{document}
\title{Smeariness Begets Finite Sample Smeariness} 
\titlerunning{Finite Sample Smeariness} 
\author{Do Tran\inst{1}, Benjamin Eltzner\inst{1},  \and  Stephan Huckemann\inst{1}}
\authorrunning{Tran, Eltzner and Huckemann}
\institute{$^1$Georg-August-Universit\"at at G\"ottingen, Germany, Felix-Bernstein-Institute for Mathematical Statistics in the Biosciences, \\Acknowledging DFG HU 1575/7, DFG GK 2088, DFG SFB 1465 and the Niedersachsen Vorab of the Volkswagen Foundation}

\maketitle
\thispagestyle{plain}

\begin{abstract}

    Fr\'echet means are indispensable for nonparametric statistics on non-Euclidean spaces. For suitable random variables, in some sense, they ``sense'' topological and geometric structure. In particular, smeariness seems to indicate the presence of positive curvature. While smeariness may be considered more as an academical curiosity, occurring rarely, it has been recently demonstrated  that \emph{finite sample smeariness} (FSS) occurs  regularly on circles, tori and spheres and affects a large class of typical probability distributions. FSS can be well described by the \emph{modulation} measuring the quotient of rescaled expected sample mean variance and population variance. Under FSS it is larger than one -- that is its value on Euclidean spaces -- and this makes quantile based tests using tangent space approximations inapplicable. We show here that near smeary probability distributions there are always FSS probability distributions and as a first step towards the conjecture that all compact spaces feature smeary distributions, we establish directional smeariness under curvature bounds.      
\end{abstract}


\section{Introduction}

For nonparametric statistics of manifold data, the Fr\'echet mean plays a central role, both in descriptive and inferential statistics. For quite some while it was assumed that its asymptotics can be approximated under very general conditions by that of means of data projected to a suitable tangent space, e.g \cite{HL98,BP05,H_Procrustes_10,H_ziez_geod_10,BL17}. Under existence of second moments, these follow a classical central limit theorem. In the last decade, however, other asymptotic regimes have been discovered, yielding so called \emph{smeary} limiting rates, limiting rates that are slower than the classical $n^{-1/2}$, where $n$ denotes sample size, e.g. \cite{HH15,EltznereHuckemann2019}. While such smeary distributions are rather exceptional, more recently, it was discovered that these exceptional distributions affect the asymptotics of a large class of otherwise unsuspicious distributions, for instance all Fisher-von-Mises distributions on the circle, cf. \cite{HundrieserEltznerHuckemann2020}: for rather high sample sizes the rates are slower than  $n^{-1/2}$ and eventually an asymptotic variance can be reached that is higher than that of tangent space data. While this effect on the circle and the sphere is explored in more detail by \cite{EltznerHundrieserHuckemann2021}, here we concentrate on rather general manifolds and discuss recent findings concerning two conjectures.

\begin{ConjN}
~
\begin{itemize}
 \item[(a)] Whenever there is a random variable featuring smeariness, there are nearby random variables featuring finite sample smeariness.
 \item[(b)] All compact spaces feature smeariness.
\end{itemize}
\end{ConjN}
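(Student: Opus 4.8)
The plan is to regard the modulation $\fm_n(\mu)$ as a functional of the generating distribution $\mu$ at fixed sample size $n$, and to combine two observations: smeariness forces $\fm_n(\mu_0)$ to exceed one as $n$ grows, while $\fm_n$ depends continuously on $\mu$. Since finite sample smeariness is by definition the inequality $\fm_n(\mu)>1$, these two facts together trap an entire neighborhood of any smeary $\mu_0$ in the FSS regime at a suitably chosen finite sample size; and within that neighborhood the non-smeary distributions, which are the statistically interesting case, are dense. I focus on part (a) of the conjecture, the statement announced in the title.

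First I would make the divergence precise. If $\mu_0$ is smeary, the sample Fr\'echet mean $\xn$ converges at a rate strictly slower than $n^{-1/2}$, say $d(\xn,\mu_0^*)\asymp n^{-1/(2(k+1))}$ with $k\ge 1$ after rescaling to a nondegenerate limit. Then the rescaled expected sample mean variance behaves like $n\,\EE[d(\xn,\mu_0^*)^2]\asymp n^{k/(k+1)}\to\infty$, and since the population variance is finite this gives $\fm_n(\mu_0)\to\infty$; in particular I may fix $n_0\in\NN$ with $\fm_{n_0}(\mu_0)>1$ strictly. Next I would prove that $\mu\mapsto\fm_{n_0}(\mu)$ is continuous with respect to weak convergence on a class of measures possessing unique population and, almost surely, unique sample Fr\'echet means. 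On a compact manifold $M$ the integrand satisfies $d(\xn,\mu^*)^2\le\diam(M)^2$, so it is bounded; the population mean $\mu^*(\mu)$ is weakly continuous under uniqueness by the standard consistency argument; and the law of the argmin $\xn$ is weakly continuous because $\mu_k^{\otimes n}\to\mu^{\otimes n}$ and the sample mean map is continuous at almost every configuration. A generalized dominated convergence argument then yields continuity of $\EE[d(\xn,\mu^*)^2]$, hence of $\fm_{n_0}$. Continuity produces a neighborhood $\mathcal U$ of $\mu_0$ on which $\fm_{n_0}>1$, so every member of $\mathcal U$ exhibits FSS at sample size $n_0$. Finally, smeariness requires the Hessian of the population Fr\'echet function at its minimizer to be degenerate, a condition of positive codimension; arbitrarily small perturbations restore a nondegenerate Hessian and thus the classical rate, so $\mathcal U$ contains non-smeary distributions, which are exactly the nearby FSS distributions the conjecture predicts.

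The main obstacle will be the continuity of $\fm_{n_0}$, and specifically the continuity of the expected sample mean variance as $\mu$ varies. Boundedness of the integrand is free on a compact space, but the sample Fr\'echet mean is an argmin, and argmin maps fail to be continuous in general; the difficulty is sharpest near $\mu_0$, whose degenerate minimum makes the minimizer's location unusually sensitive to perturbation. The crux is therefore to secure, along the perturbation, that both population and sample Fr\'echet means remain uniquely defined and vary continuously, so that dominated convergence applies. I expect the cleanest route is to perturb inside a symmetry-preserving family that pins the population mean at $\mu_0^*$ and merely opens the Hessian from degenerate to slightly positive definite: this avoids any motion of the minimizer while breaking smeariness, and for the fixed finite $n_0$ it leaves the sample mean variance essentially at its smeary value, so that $\fm_{n_0}>1$ persists.
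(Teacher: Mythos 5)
Your architecture---(i) smeariness forces $\fm_{n_0}>K$ at some finite $n_0$, (ii) $\mu\mapsto\fm_{n_0}(\mu)$ is weakly continuous, (iii) non-smeary laws are dense near the smeary one---is genuinely different from the paper's proof, but steps (ii) and (iii), where you yourself locate the difficulty, are exactly where the argument does not close. For (ii), your dominated-convergence sketch needs $M$ compact, whereas the paper's theorem holds on any complete Riemannian manifold satisfying Assumptions \ref{As:1} and \ref{As:2}; it needs almost-sure uniqueness of the sample Fr\'echet mean under $\mu_0^{\otimes n_0}$, which the paper never assumes (item 4 of Assumptions \ref{As:1} resolves ties by uniform random selection, so the law of $\mun$ is induced by a Markov kernel rather than a pushforward, and weak continuity of $\mu\mapsto\mathrm{law}(\mun)$ becomes a lemma in its own right, requiring a.e.\ weak continuity of that kernel plus a portmanteau argument); and it needs continuity of the population mean, provable under uniqueness but not free. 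None of this is hopeless on a compact $M$, but it is a substantial unproven lemma, not a routine verification.

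The decisive gap is (iii). ``Degenerate Hessian is a positive-codimension condition'' is a finite-dimensional transversality heuristic; in the infinite-dimensional space of probability measures it neither produces a non-smeary law inside $\mathcal{U}$ satisfying Assumptions \ref{As:1} and \ref{As:2}, nor proves the implication you need, namely that a nondegenerate Hessian forces $\sup_{n}\fm_n<\infty$ (that requires convergence of rescaled second moments, i.e.\ uniform integrability, not merely a CLT). What fills this gap is an explicit perturbation, and that explicit perturbation is the paper's \emph{entire} proof---it is precisely the construction your closing paragraph anticipates but does not execute. Mix $X$ with a point mass at its own mean, $\Prb\{X_\kappa=\mu\}=\kappa$ and $\Prb\{X_\kappa=X\}=1-\kappa$. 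Then $F^{X_\kappa}(p)=\kappa\, d(p,\mu)^2+(1-\kappa)F^X(p)$, so $\mu$ remains the unique mean; because $X$ is $r$-power smeary with $r>0$, the tangent-space Fr\'echet function becomes $f^{X_\kappa}(x)=\mathrm{const}+\kappa|x|^2+o(|x|^2)$, i.e.\ Assumption \ref{As:2} holds with $r=0$, and the GCLT (\ref{eq:GCLT}) gives $\sqrt{n}\,\xn^{X_\kappa}\inD\cN\bigl(0,\tfrac{1-\kappa}{\kappa^2}\cov[\log_\mu X]\bigr)$, whence $\fm_n^{X_\kappa}\to\kappa^{-2}$. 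Choosing $\kappa=K^{-1/2}$ yields in one stroke non-smeariness, finite sample smeariness, and the quantitative bound $\sup_n\fm_n^{X_\kappa}\geq K$, with the perturbation small in total variation exactly when the modulation bound is large. Once this computation is done, your fixed-$n_0$ continuity machinery is superfluous; without it, your argument is incomplete. Note also that you address only part (a) of the conjecture; the paper additionally proves a first step toward part (b), constructing a directionally smeary distribution under an upper sectional curvature bound via Toponogov's comparison theorem, which your proposal does not touch.
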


Here, we prove Conjecture (a) under the rather general concept of \emph{power smeariness} and Conjecture (b) for \emph{directional smeariness} under curvature bounds. We also provide for simulations, showing that classical quantile based tests fail under the presence of finite sample smeariness, suitably designed bootstrap tests, however, amend for it.

\section{Assumptions, Notation and Definitions}

Let $M$ be a complete Riemannian manifold of dimension $m \in \NN$ with induced distance $d$ on $M$. Random variables $X_1,\ldots,X_n \iid X$ on $M$ with silently underlying probability space $(\Omega,\Prb)$ induce \emph{Fr\'echet functions}
$$ F(p) = \EE[d(X,p)^2]\mbox{ and } F_n(p) = \frac{1}{n}\sum_{j=1}^n d(X_j,p)^2\mbox{ for }p\in M\,.$$
We also write $F^X$ and $F^X_n$ to refer to the underlying $X$. 

\begin{Lem}\label{lem:compact-mean-set}
 If $F(p) < \infty$ for some $p\in M$, the set of minimizers $\argmin_{p\in M} F(p)$ is not void and compact. In particular, $\argmin_{p\in M} F_n(p)$ admits a probability measure, uniform with respect to the Riemannian volume.
\end{Lem}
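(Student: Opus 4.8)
The plan is to show that $F$ is lower semicontinuous with compact sublevel sets, so that it attains its infimum on a nonempty compact set. Fix a reference point $p_0$ with $F(p_0) < \infty$, which exists by hypothesis. The first step is to establish that $F$ is finite everywhere and continuous. By the triangle inequality, $d(X,p)^2 \le (d(X,p_0) + d(p_0,p))^2 \le 2d(X,p_0)^2 + 2d(p_0,p)^2$, so taking expectations gives $F(p) \le 2F(p_0) + 2d(p_0,p)^2 < \infty$ for every $p \in M$. Hence $F$ is finite on all of $M$, and a similar two-sided comparison shows that $|d(X,p)^2 - d(X,q)^2| \le d(p,q)\,(d(X,p) + d(X,q))$ is dominated by an integrable envelope for $p,q$ in a bounded set, so that dominated convergence yields continuity of $F$.

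The second step is coercivity: I would show that $F(p) \to \infty$ as $d(p_0,p) \to \infty$, which makes the sublevel sets $\{p : F(p) \le c\}$ bounded. Using the reverse triangle inequality $d(X,p) \ge d(p_0,p) - d(X,p_0)$, for $d(p_0,p)$ large one gets $d(X,p)^2 \ge \tfrac{1}{2}d(p_0,p)^2 - d(X,p_0)^2$ on the event where $d(X,p_0)$ is not too large; integrating and choosing the splitting carefully shows $F(p) \ge \tfrac{1}{4}d(p_0,p)^2 - C$ for a constant $C$ depending only on $F(p_0)$. Thus every sublevel set is contained in a closed metric ball $\overline{B}(p_0,R)$. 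Since $M$ is a complete Riemannian manifold, the Hopf--Rinow theorem guarantees that closed bounded sets are compact, so each sublevel set is a closed subset of a compact set, hence compact. Combined with continuity (in particular lower semicontinuity), the standard Weierstrass argument shows $\argmin_{p \in M} F(p)$ is nonempty; it is closed as the preimage of the minimum value under the continuous $F$, and bounded by coercivity, hence compact.

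For the addendum about $F_n$, I note that $F_n$ is itself a Fr\'echet function, namely the one associated with the empirical distribution $\tfrac{1}{n}\sum_j \delta_{X_j}$, which trivially has finite Fr\'echet function at every point (it is a finite sum of squared distances). Applying the already-proved statement pathwise (for almost every $\omega \in \Omega$) shows that $\argmin_{p} F_n(p)$ is nonempty and compact. To equip this minimizer set with a uniform probability measure with respect to the Riemannian volume, I would normalize the restriction of the Riemannian volume measure to this set; for this to make sense the minimizer set must have finite, positive volume, which follows from compactness (finite volume) and would require a brief remark ruling out the degenerate case where the set has zero volume, in which case one defaults to an appropriate measure on the lower-dimensional set.

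The main obstacle I anticipate is the last point: the argmin set of $F_n$ can genuinely be lower-dimensional (indeed generically it is a single point, of volume zero), so ``uniform with respect to the Riemannian volume'' must be interpreted as the normalized Hausdorff measure of the appropriate dimension on this set, or simply as a measurable selection/uniform draw from a set that is almost surely a singleton. I expect the cleanest reading is that the statement merely asserts the measurability needed to define a random minimizer $\xn$, so the essential mathematical content is the nonemptiness and compactness established in the first two steps via lower semicontinuity and Hopf--Rinow coercivity; the measure-theoretic addendum is then a matter of invoking a measurable selection theorem for the compact-valued, measurable argmin multifunction.
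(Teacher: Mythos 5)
Your proof is correct, and it reaches the compactness conclusion by a genuinely different route than the paper. The paper bounds the minimizer set directly: for any two minimizers $\mu_1,\mu_2$ it uses $d(\mu_1,\mu_2)\leq \EE[d(\mu_1,X)]+\EE[d(\mu_2,X)]\leq \sqrt{F(\mu_1)}+\sqrt{F(\mu_2)}$, so the argmin set has diameter at most $2\sqrt{\min F}$, with no coercivity argument needed; existence of a minimizer is asserted tersely ``by completeness,'' and compactness of the closed bounded argmin set is obtained by invoking the Nash isometric embedding into a finite-dimensional Euclidean space. You instead prove coercivity ($F(p)\geq \tfrac12 d(p_0,p)^2 - F(p_0)$ follows even without your event splitting, since $d(X,p)^2\geq (d(p_0,p)-d(X,p_0))^2\geq \tfrac12 d(p_0,p)^2-d(X,p_0)^2$ always), and then get both existence (Weierstrass on compact sublevel sets) and boundedness of the argmin in one stroke, concluding compactness via Hopf--Rinow. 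Your route is arguably cleaner: it makes explicit the argument hidden in the paper's one-line existence claim, and Hopf--Rinow is the standard tool here, whereas the Nash-embedding step is delicate (a Riemannian isometric embedding only dominates extrinsic by intrinsic distance, and its image need not be closed in the ambient space unless one already knows something like Hopf--Rinow), so the paper's compactness step implicitly leans on what you invoke directly. Finally, you are right to flag the measure-theoretic addendum: the argmin set of $F_n$ is generically a singleton, hence of Riemannian volume zero, so ``uniform with respect to the Riemannian volume'' cannot mean normalized volume restriction in general; the paper glosses over this exactly as you suspect, and your reading via a Hausdorff measure of the appropriate dimension or a measurable selection is the honest way to make the selection $\mun$ in Assumptions~\ref{As:1} well defined.
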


\begin{proof} If $F(p) < \infty$ for some $p\in M$, then due to the triangle inequality $F(p) < \infty$ for all $p\in M$. Further, by completeness of $M$ a minimizer of the Fr\'echet function is assumed, by continuity the set of minimizers is closed and due to
 $$ d(\mu_1,\mu_2) \leq \EE[d(\mu_1,X)] + \EE[d(\mu_2,X)] \leq \sqrt{\EE[d(\mu_1,X)^2]}+\sqrt{\EE[d(\mu_2,X)^2]}\,$$
it is bounded. Due to \cite{Na56}, $M$ can be isometrically embedded in a finite dimensional Euclidean space, hence the set of minimizers is compact. Thus the set of minimizers of $F(p)$, and as well those of $F_n(p)$ admit a probability measure, uniform with respect to the Riemannian volume. 
\end{proof}

We work under the following additional assumptions.

\begin{As}\label{As:1}
Assume
\begin{enumerate}
\item  $X$ is not a.s. a single point, 
 \item 
 $F(p)< \infty$ for some $p \in M$, 
 \item 
 there is a unique minimizer $\mu = \argmin_{p\in M} F(p)$, called the \emph{Fr\'echet population mean}, 
 \item 
 $\mun \in \argmin_{p\in M} F_n(p)$ is a selection from the set of minimizers uniform with respect to the Riemannian volume, called a \emph{Fr\'echet population mean},
\item and that the cut locus $\cut(\mu)$ of $\mu$ is either void or can be reached by two different geodesics from $\mu$.  
\end{enumerate}
The last point ensures that $\Prb\{X\in \cut(\mu)\} =0$ due to \cite{LeBarden2014}.
\end{As}

\begin{Def}
With the Riemannian exponential $\exp_\mu$, well defined on the tangent space $T_\mu M$, let 
\begin{eqnarray*}
 \rho(X,x) &:=& d(X,\exp_\mu x)^2\,,\\
 f(x) &:=& F(\exp_\mu(x))\,,\\
 f_n(x) &:=& F_n(\exp_\mu(x))\,.
\end{eqnarray*}
We also write $f^X$ and $f^X_n$ to refer to the underlying $X$. Further, with the Riemannian logarithm $\log_\mu p = (\exp_\mu)^{-1}(p)$, well defined outside of $\cut(\mu)$, we have
\begin{align}\label{eq:rho-expand}
  \rho(X,x) = \|\log_\mu X - x\|^2 + \mathcal{O}(|x|^2) \, .
\end{align}
We define 
	\begin{enumerate}
		\item[i.] the \emph{population variance} 
			$$V := F(\mu) = f(0) = \EE[d(X,\mu)^2] = \tr \left(\cov[\log_\mu X]\right)\,;$$
		\item[ii.] the \emph{Fr\'echet sample mean variance} 
				$$V_n :=  \EE[d(\mun,\mu)^2]; \text{ and }$$ 
		\item[iii.] the \emph{modulation}  
			\begin{eqnarray*}
 				\fm_n &:=& \frac{nV_n}{V}\,.
			\end{eqnarray*} 
	\end{enumerate}

We shall also write $\xn := \log_\mu \mun$ for the image of the empirical Fr\'echet mean $\hat{\mu}_n$ in the tangent space at $\mu$. Again, if necessary, we write $V^X, V_n^X, \mun^X, \xn^X$ and $\fm^X_n$ to refer to the underlying $X$.
\end{Def}

\begin{As}\label{As:2}
In order to reduce notational complexity, we also assume that 
\begin{eqnarray}\label{eq:Frecht-fcn}
 f(x) &=&  \sum_{j=1}^m \limits T_j |(R x)_j|^{r+2} + o (|x|^{r+2})
\end{eqnarray}
with some $r\geq 0$, where $(Rx)_j$ is the $j$-th component after multiplication with an orthogonal matrix $R$ and $T_1,\ldots,T_m$ are positive.
\end{As}

With these definitions, we can define various asymptotic regimes.

\begin{Def} \label{def:many_smeary}
 We say that $X$ is
 \begin{itemize}
  \item[(i)] \emph{Euclidean} if $\fm_n = 1$ for all $n \in \NN$,
  \item[(ii)] \emph{finite sample smeary} if $1 <\sup_{n\in\NN} \fm_n < \infty$,
  \item[(iii)] \emph{smeary} if $\sup_{n\in\NN} \fm_n = \infty$,
  \item[(iv)] \emph{$r$-power smeary} if (\ref{eq:Frecht-fcn}) holds with $r>0$.
 \end{itemize}
\end{Def}

If the manifold $M$ is a Euclidean space and if second moments of $X$ exist, then Assumptions \ref{As:1} hold and due to the classical central limit theorem, $X$ is then Euclidean (cf.~Definition~\ref{def:many_smeary}). In case of $M$ being a circle or a torus, as shown in \cite{HundrieserEltznerHuckemann2020}, $X$ is Euclidean only if it is sufficiently concentrated. As further shown there, if $X$ is spread beyond a geodesic half ball on the circle or the torus, it features finite sample smeariness, which, on the circle and the Torus manifests, among others, in two specific subtypes, cf. ({\bf contribution to this GSI2021}) 

In consequence of the \emph{general central limit theorem} (GCLT) from \cite{EltznereHuckemann2019} under Assumptions \ref{As:1} and \ref{As:2},
\begin{eqnarray}\label{eq:GCLT}
 n^{\frac{1}{2r+2}}(R^T\xn)_j &\inD& \cH_j\,\mbox{ for all }1\leq j \leq m
\end{eqnarray}
where $T= \diag(T_1,\ldots.T_m)$ and $(\cH_1 |\cH_1|^{r},\ldots,\cH_m |\cH_m|^{r})$ is multivariate Gaussian with zero mean and covariance
$$ \frac{4}{(r+2)^2} T^{-1}\,\cov[\log_\mu X] \,T^{-1}\,,$$
we have at once that $r$-power-smeary for $r>0$ implies smeariness.

\section{Smeariness begets Finite Sample Smeariness}
\begin{Th} 
 Under  Assumptions \ref{As:1} and \ref{As:2}, if there is a random variable on $M$ that is $r$-power smeary, $r>0$, then there is one that is finite sample smeary. More precisely, for every $K>1$ there is a random variable $Y$ with $\sup_{n\in\NN} \fm^{Y}_n \geq K$.
\end{Th}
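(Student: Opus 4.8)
The plan is to \emph{regularize} the smeary random variable $X$ by mixing in a point mass at its own Fr\'echet mean $\mu$, turning it into a nearby non-degenerate law, and then to play off continuity of the modulation at a single, fixed, large sample size against the asymptotic boundedness of the modulation for the regularized law. Since $X$ is $r$-power smeary with $r>0$, the consequence of the GCLT recorded after (\ref{eq:GCLT}) shows that $X$ is smeary, i.e.\ $\sup_{n}\fm^X_n=\infty$. Hence I may fix $N\in\NN$ with $\fm^X_N>K$; the whole construction will aim to preserve this one value while taming all the others.

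Next I would set $Y_t:=(1-t)X+t\,\delta_\mu$ for $t\in(0,1]$, that is, draw $X$ with probability $1-t$ and return $\mu$ with probability $t$. Because $\mu$ minimizes both $f^X$ and $f^{\delta_\mu}(x)=|x|^2$, it minimizes every convex combination, and since $d(\cdot,\mu)^2$ is strictly positive off $\mu$ the mixture inherits $\mu$ as its \emph{unique} Fr\'echet mean; the cut-locus condition of Assumption~\ref{As:1} is likewise inherited from $X$. Near the origin $f^{Y_t}(x)-V^{Y_t}=t|x|^2+o(|x|^2)$, as the quadratic point-mass term dominates the order-$(r+2)$ term of $X$; thus $Y_t$ satisfies Assumptions~\ref{As:1} and~\ref{As:2} with $r=0$. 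In this classical regime the GCLT (\ref{eq:GCLT}) yields $n^{1/2}\xn^{Y_t}\inD\cH$, and combined with uniform integrability of $n|\xn^{Y_t}|^2$ this gives $\fm^{Y_t}_n\to 1$; as each $\fm^{Y_t}_n$ is finite, $\sup_n\fm^{Y_t}_n<\infty$, so $Y_t$ is not smeary.

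The crucial step is continuity of $\fm_N$ at $t=0$. Linearity of the Fr\'echet function gives $V^{Y_t}=(1-t)V^X\to V^X$. For the sample-mean variance I would couple the samples: in a size-$N$ draw from $Y_t$ the event $A_t$ that all $N$ points come from $X$ has probability $(1-t)^N$, and on $A_t$ the empirical Fr\'echet mean depends only on the observed points, hence coincides with that of an $X$-sample, contributing $(1-t)^N V^X_N$. On $A_t^{c}$ the elementary bound $d(\mun,\mu)^2\le C(N)\sum_{j}d(X_j,\mu)^2$ (from $F_N(\mun)\le F_N(\mu)$ and the triangle inequality) together with $\EE[d(X,\mu)^2]=V^X<\infty$ and $\Prb(A_t^c)\to 0$ forces an $o(1)$ contribution, whence $V^{Y_t}_N\to V^X_N$ and $\fm^{Y_t}_N\to\fm^X_N$. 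Choosing $t$ small enough that $\fm^{Y_t}_N>K$ and setting $Y:=Y_t$ then yields $K<\fm^Y_N\le\sup_n\fm^Y_n<\infty$, i.e.\ $Y$ is finite sample smeary with $\sup_n\fm^Y_n\ge K$.

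The hard part will be the uniform moment control underlying both the boundedness $\sup_n\fm^{Y_t}_n<\infty$ and the finite-$N$ continuity: in each case one must upgrade distributional convergence to convergence of second moments, namely uniform integrability of $n|\xn^{Y_t}|^2$ as $n\to\infty$ and of $d(\mun^{Y_t},\mu)^2$ as $t\to 0$. On a general complete manifold these tail estimates, rather than the soft argmin-stability that follows from uniqueness of $\mu$, are the delicate ingredient; the point-mass choice $\delta_\mu$ is what keeps the population mean pinned at $\mu$ throughout and thereby isolates exactly this moment-convergence issue.
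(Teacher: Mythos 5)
Your construction is exactly the paper's: the paper defines $X_\kappa$ by $\Prb\{X_\kappa=\mu\}=\kappa$, $\Prb\{X_\kappa=X\}=1-\kappa$, which is your $Y_t$ with $t=\kappa$. The difference lies in how the modulation of the mixture is controlled, and here your argument contains a concrete error. You claim that, since $Y_t$ falls into the classical $r=0$ regime, the GCLT gives $\fm_n^{Y_t}\to 1$. This is false: the limiting covariance in (\ref{eq:GCLT}) carries the factors $T^{-1}$, and for the mixture the coefficient matrix is $T=t\,I$, not $I$ (the point mass contributes $t|x|^2$, while the $r$-power smeary part, having no quadratic term, contributes nothing at this order). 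Hence
\begin{align*}
 \sqrt{n}\,\xn^{Y_t}\inD \cN\Bigl(0,\tfrac{1-t}{t^2}\cov[\log_\mu X]\Bigr),
 \qquad
 \fm_n^{Y_t}\;\to\; \frac{(1-t)V^X/t^2}{(1-t)V^X}=\frac{1}{t^2}\,,
\end{align*}
not $1$. A limit exceeding $1$ despite the classical $\sqrt{n}$ rate is precisely the finite sample smeariness mechanism the theorem is about; your intermediate claim in effect denies the phenomenon you are trying to exhibit.

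The error happens to be harmless for your logic, because all you actually use is that $\lim_n \fm_n^{Y_t}$ is finite, which holds for the value $1/t^2$ just as for $1$. The rest of your argument is sound: fixing $N$ with $\fm_N^X>K$ (possible since $r$-power smeariness implies $\sup_n\fm_n^X=\infty$), the coupling on the event that all $N$ draws come from $X$, together with the domination $d(\mun,\mu)^2\le \frac{4}{N}\sum_j d(X_j,\mu)^2$, does give $\fm_N^{Y_t}\to\fm_N^X$ as $t\to 0$, whence $\sup_n\fm_n^{Y_t}>K$ for small $t$. So your proof is repairable by replacing the limit $1$ with $1/t^2$. But once that limit is computed correctly, the entire fixed-$N$ continuity detour becomes superfluous: choosing $t=K^{-1/2}$ makes the limit exactly $K$, so $\sup_n\fm_n^{Y_t}\ge K$ follows immediately --- and that one-step computation is the paper's whole proof. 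Finally, note that both routes rest on the same unaddressed ingredient, namely upgrading the distributional convergence of $\sqrt{n}\,\xn^{Y_t}$ to convergence of the second moments $n\EE[|\xn^{Y_t}|^2]$; you flag this uniform integrability issue explicitly, whereas the paper passes from the GCLT to $nV_n^{X_\kappa}=\frac{1-\kappa}{\kappa^2}V^X$ without comment.
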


\begin{proof}
 Suppose that $X$ is $r$-power smeary, $r>0$ on $M$. For given $K>0$ let $0<\kappa < 1$ such that $\kappa^{-2} = K$ and define the random variable $X_\kappa$ via
 $$ \Prb\{X_\kappa = \mu\} = \kappa\mbox{ and } \Prb\{X_\kappa = X \}= 1-\kappa\,.$$
   With the sets $A = \{X_\kappa = \mu\}$ and $B=\{X_\kappa = X \}$, the Fr\'echet function of $X_\kappa$ is given by
  \begin{eqnarray*}
    F^{X_\kappa}(p) &=& \int_A d(p,\mu)^2 \,d\Prb^{X_\kappa} + \int_B d(X,\mu)^2 \,d\Prb^{X_\kappa} \\
    &=& \kappa d(p,\mu)^2 + (1-\kappa) F^X(p)
  \end{eqnarray*}
  which yields that $X_\kappa$ has the unique mean $\mu$ and population variance 
  \begin{eqnarray}\label{proof:Th1}
  V^{X_\kappa} &=& F^{X_\kappa}(\mu) ~=~ (1-\kappa) F^X(\mu) ~=~ (1-\kappa) V^X\,.
  \end{eqnarray}
   by hypothesis. Since $\cov[\log_\mu X_\kappa] = (1-\kappa) \cov[\log_\mu X]$, we have thus with the GCLT (\ref{eq:GCLT}),
  $$ \sqrt{n} \xn^{X_\kappa} \inD \cN\left(0,\frac{1-\kappa}{\kappa^2}\,\cov[\log_\mu X]\right)\,.$$
  This yields $nV_n^{X_\kappa} = \frac{1-\kappa}{\kappa^2}V^X$ and in conjunction with (\ref{proof:Th1}) we obtain
  $$ \fm_n^{X_\kappa} = \frac{1}{\kappa^2}\,.$$
  Thus, $Y = X_{K^{-1/2}}$ has the asserted property. 
  \end{proof}

 \section{Directional Smeariness}

 \begin{Def}[Directional Smeariness]
  We say that $X$ is directional smeary if (\ref{eq:Frecht-fcn}) holds for $r=2$ with some of the $T_1,\ldots,T_m$ there equal to zero.
 \end{Def}

  \begin{Th} Suppose that $M$ is a Riemannian manifold with sectional curvature bounded from above by $\KK>0$ such that there exists a simply connected geodesic submanifold of constant sectional curvature $\KK$. Then $M$ features a random variable that is directional smeary.\end{Th}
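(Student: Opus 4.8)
The plan is to reduce the problem to the model space and exhibit a concrete distribution there. Since the geodesic (totally geodesic) submanifold $N\subseteq M$ is simply connected of constant sectional curvature $\KK>0$ and of dimension $d\geq 2$ (constant \emph{sectional} curvature presupposes $d\geq 2$), the classification of space forms identifies $N$ isometrically with a round sphere of radius $1/\sqrt{\KK}$; I would therefore fix a totally geodesic $\mathbb{S}^{2}$ of radius $1/\sqrt{\KK}$ inside $N$ and build the random variable with support and Fr\'echet population mean $\mu$ on it. Because $N$ is totally geodesic, minimizing geodesics between points of $N$ close to $\mu$ remain in $N$, so intrinsic and ambient distances agree there and the restriction of $f$ to $T_\mu N$ is governed entirely by the constant-curvature model. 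For the remaining directions I would invoke the Hessian comparison theorem: the bound $\mathrm{sec}_M\leq\KK$ forces the Hessian of $p\mapsto d(\phi,p)^2$ at $\mu$, in any direction orthogonal to the geodesic joining $\phi$ and $\mu$, to be at least the model value $2\sqrt{\KK}\,s\cot(\sqrt{\KK}\,s)$ with $s=d(\phi,\mu)$, which is exactly $0$ when $s=\pi/(2\sqrt{\KK})$ and negative beyond it.

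The heart of the construction is the explicit behaviour of the Fr\'echet function on the sphere. For a distribution on a great circle through $\mu$, symmetric about $\mu$, the one-dimensional computation yields a Hessian of the schematic form $2-c\,g(\mathrm{antipode})$, where $g$ is the density at the point antipodal to $\mu$ and $c=c(\KK)>0$; the correction term is the cut–locus contribution that is invisible to naive differentiation under the integral. I would choose such a distribution (for instance uniform mass on a suitable parallel together with an atom at $\mu$, or a smooth even density) and first tune its spread so that the full Hessian of $f$ at $\mu$ vanishes. Under the reflection symmetries of the configuration this Hessian is automatically diagonal, so annihilating it amounts to finitely many scalar equations, after which (\ref{eq:Frecht-fcn}) holds with $r=2$, namely $f(x)=\sum_j T_j|(Rx)_j|^4+o(|x|^4)$. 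The \emph{directional} feature is then produced by breaking the rotational symmetry down to a discrete reflection symmetry and exploiting the remaining freedom in the density to drive one quartic coefficient $T_j$ to $0$ while keeping the others strictly positive; a degrees-of-freedom count (an entire density against finitely many scalar constraints) makes this feasible.

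I expect the obstacle to be twofold. First, the quartic coefficients must be extracted from an expansion in which the cut locus $\cut(\exp_\mu x)$ of the moving argument sweeps across the mass near the antipode of $\mu$: the tangential integrand $s\cot(\sqrt{\KK}\,s)$ is non-integrable there and is rendered finite only by the cut–locus boundary terms, so the delicate point is to carry this regularization out to fourth order and read off the signs of the $T_j$. Second, and more seriously, controlling the directions transverse to $N$ when $\dim M>\dim N$ is subtle, because the form (\ref{eq:Frecht-fcn}) with $r=2$ tolerates no quadratic term and hence demands that the \emph{entire} Hessian vanish. The comparison theorem gives only the one-sided estimate that transverse Hessians are $\geq$ the model value, which is the wrong direction for forcing flatness; this is precisely why the constant-curvature region $N$, realizing the extremal curvature $\KK$, is indispensable. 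I would therefore keep all active (flat and quartic) directions inside $T_\mu N$, where the comparison is an equality, and argue that the genuinely transverse-to-$N$ contributions can be kept from reinstating a quadratic term — the step where the full strength of the hypothesis on $N$, rather than merely the bound $\mathrm{sec}_M\leq\KK$, is used.
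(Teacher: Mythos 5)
There is a genuine gap, and it starts with the target you set yourself. The paper's proof never makes the entire Hessian vanish: it takes three atoms --- mass $1-\epsilon$ at $\mu$ and mass $\epsilon/2$ at $\gamma(\pm t)$, $\gamma(t)=\exp_\mu(tV)$, all inside the constant-curvature section spanned by $V,W$ --- and uses the \emph{exact} constant-curvature formula $\nabla^2F^{t,\epsilon}(\mu)[W,W]=(1-\epsilon)+2\epsilon\,(t\sqrt{\KK})\cot(t\sqrt{\KK})$ to tune $t$ near $\pi/\sqrt{\KK}$ so that the Hessian vanishes in the \emph{single} direction $W$, while along $V$ (radial to the two atoms) it stays strictly positive. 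Directional smeariness here means a degenerate-but-nonzero Hessian; positive Hessian in the remaining directions is harmless, so the one-sided curvature comparison in transverse directions costs nothing. Your reading --- that (\ref{eq:Frecht-fcn}) must hold with $r=2$ in all of $T_\mu M$, so that the whole Hessian vanishes, and that additionally one quartic coefficient must be driven to zero --- creates the transverse-direction problem that you acknowledge but only promise to ``argue'' away, and by your method it cannot be argued away: take $M=S^2_{\KK}\times S^2_{\KK'}$ with $\KK'\le\KK$ and $N=S^2_{\KK}\times\{q\}$. Squared distance splits over the factors, so \emph{every} distribution supported on $N$ (including your atom at $\mu$) has Hessian exactly $2$ in the transverse directions, independently of any tuning; no choice of density on $N$ prevents the quadratic term there. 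The hypothesis on $N$ is used in the paper to make the Hessian formula in the plane of $V$ and $W$ exact, not to flatten transverse directions.

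Two further gaps. First, your key formula, Hessian $=2-c\,g(\mathrm{antipode})$, is a pure circle phenomenon and fails in dimension at least two: for a great-circle-supported density positive near the antipode, the Hessian in the direction orthogonal to the circle (inside the model sphere) involves $\int 2\theta\sqrt{\KK}\cot(\theta\sqrt{\KK})\,g(\theta)\,d\theta$, and the $1/(\pi/\sqrt{\KK}-\theta)$ singularity of the integrand is integrable against the two-dimensional area element $\sin(\theta\sqrt{\KK})\,d\theta\,d\phi$ but \emph{not} against one-dimensional arclength; the orthogonal Hessian diverges to $-\infty$, so $\mu$ is not even a local minimum and your candidate mean escapes the circle. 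This is exactly why the paper places atoms at distance $t$ strictly below $\pi/\sqrt{\KK}$ and lets $t\to\pi/\sqrt{\KK}$ via Eq.~(\ref{eq:Th8:1}), instead of putting continuous mass at the cut locus. Second, you nowhere establish that $\mu$ is the unique global Fr\'echet mean of your construction; this is half of the paper's proof, done by transplanting the measure to the model sphere $S^2_{\KK}$, verifying uniqueness there by direct computation, and pulling back the inequality $F_{\zeta}(\tilde p)\le F^{t,\epsilon}(p)$ via Toponogov's theorem, which is the direction in which the bound on sectional curvature actually helps. Without global minimality at $\mu$, a vanishing Hessian at a critical point says nothing about smeariness of the Fr\'echet mean.
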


\begin{proof} Let $\mu\in M$ and consider orthogonal unit vectors $V,W \in T_{\mu}M$ such that the sectional curvature along $\exp_{\mu}tW$ between $W$ and $V$ is $\KK$.
  Let us consider a point mass random variable $X$ with $P(\{X=\mu \})=1$, a geodesic $\gamma(t) = \exp(tV)$, and a family of random variables $X_t$ defined as
 		\[P\{X_t=\delta_{\gamma(t)} \} = P\{X_t=\delta_{\gamma(-t)} \} =\frac{1}{2}. \] 
 	We shall show that we can choose $t$ close to $\pi/\sqrt{\KK}$ and $\epsilon>0$ sufficiently small such that the random variable $Y_{t,\epsilon}$, which is defined as 
 		\[ P\{Y_{t,\epsilon} = X_t\}=\epsilon,\  P\{ Y_{t,\epsilon}=X\}=1-\epsilon, \]
 	is directional smeary.

 	Let us write $F^{t,\epsilon}(p)$ for the Fr\'echet function of $Y_{t,\epsilon}$. Suppose for the moment that $\mu$ is the unique Fr\'echet mean of $Y_{t,\epsilon}$, we shall show that for sufficient small $\epsilon$ and $t$ close to $\pi/\sqrt{K}$, the Hessian at $\mu$ of $F^{t,\epsilon}$ vanishes in some directions, which will imply that $Y_{t,\epsilon}$ is directional smeary as desired.
 
 We claim that $\nabla^2F^{t,\epsilon}(\mu)[W,W]=0,$ which will fulfill the proof. Indeed, it follows from \cite[Appendix B.2]{Tra21} that 
 		\[ \nabla^2F^{t,\epsilon}(\mu)[W,W]=(1-\epsilon)+2\epsilon(t\sqrt{\KK}) \cot (t\sqrt{\KK}). \] 
 	Hence, if we choose $t$ close to $\pi/\sqrt{\KK}$ such that 
 		\begin{equation} \label{eq:Th8:1}
			t\sqrt{\KK} \cot (t\sqrt{\KK}) = -\frac{1-\epsilon}{2\epsilon} 
		\end{equation} 
	then $ \nabla^2F^{t,\epsilon}(\mu)[W,W]=0$ as claimed.

  	It remains to show that $\mu$ is the unique Fr\'echet mean of $Y_{t,\epsilon}$ for $t$ close to $\pi/\sqrt{\KK}$ and $\epsilon$ satisfies Eq.\ref{eq:Th8:1}. Indeed, for any $p \in M$ we have 
 		\[F^{t,\epsilon}(p)=\frac{1}{2}\epsilon\dd^2(p,\gamma(t))+ \frac{1}{2}\epsilon\dd^2(p,\gamma(-t))+(1-\epsilon)\dd^2(p,\mu).\]
 	For small $\epsilon$ then $F^{t,\epsilon}(\mu) \leq F^{t,\epsilon}(p)$ if $\dd(p,\mu) \geq \pi/\sqrt{\KK}.$ Thus, it suffice to show that $\mu$ is the unique minimizer of the restriction of $F^{t,\epsilon}$ on the open ball $B(\mu,\pi/\sqrt{\KK})$. Let us consider a model in the two dimensional sphere $S^2_{\KK}$ of curvature $\KK$ with geodesic distance $\dd_s$. Let $\tilde{\mu}$ be the South Pole, $\tilde{V} \in T_{\tilde{\mu}}S^2_{\KK}$ be unit vector and $\tilde{\gamma}(t)=\exp_{\tilde{\mu}}t\tilde{V}$. Let $t$ and $\epsilon$ satisfy Eq.~\eqref{eq:Th8:1} and consider the following measure on $S^2_{\KK}$ 
 		\[ \zeta = (1-\epsilon)\delta_{\tilde{\mu}}+\frac{\epsilon}{2}(\delta_{\tilde{\gamma}(t)}+\delta_{\tilde{\gamma}(-t)}).\]
 	Write $F_{\zeta}$ for the Fr\'echet function of $\zeta$, then for any $q \in S^2_{\KK}$,
 		\[F_{\zeta}(q)=\frac{1}{2}\epsilon \dd_s^2(q,\tilde{\gamma}(t))+ \frac{1}{2}\epsilon\dd_s^2(q,\tilde{\gamma}(-t))+(1-\epsilon)\dd_s^2(q,\tilde{\mu}).\]
	It follows from the definition of $\zeta$ that $F_{\zeta}(\tilde{\mu})=F^{t,\epsilon}(\mu)$. Direct computation of $F_{\zeta}$ on the sphere $S^2_{\KK}$ verifies that $\tilde{\mu}$ is the unique Fr\'echet mean of $\zeta$. 
 	
 	On the other hand, suppose that $\tilde{p}\in S^2_{\KK}$ with $\dd_s(\tilde{p},\tilde{\mu})=\dd(p,\mu)<\pi/\sqrt{\KK}$ and $\angle(\log_{\tilde{\mu}}\tilde{p},\tilde{V})=\angle(\log_{\mu}p,V)$. Because $\KK$ is the maximum sectional curvature of $M$, Toponogov theorem, c.f. \cite[Theorem 2.2]{cheeger2008comparison} implies that 
 		 \[ \dd_s(\tilde{p},\tilde{\gamma}(-t))\leq \dd(p,\gamma(-t)) \text{ and } \dd_s(\tilde{p},\tilde{\gamma}(t))\leq \dd(p,\gamma(t)).\]
 		 Thus $F_{\zeta}(\tilde{p}) \leq F^{t,\epsilon}(p)$. Because $\tilde{\mu}$ is the unique minimizer of $F_{\zeta}$ and $F_{\zeta}(\tilde{\mu})=F^{t,\epsilon}(\mu)$ it follows that $\mu$ is the unique minimizer of $F^{t,\epsilon}|_{B(\mu,\pi/\sqrt{\KK})}$ as needed.
 	
\end{proof}

 \section{Simulations}

 In the analysis of biological cells' filament structures, buckles of \emph{microtubules} play an important role, e.g. \cite{nolting2014mechanics}. For illustration of the effect of FSS in Kendall's shape spaces $\Sigma_m^k$ of $k$ landmark configurations in the $m$-dimensional Euclidean space, e.g. \cite{DM16}, we have simulated two groups of $20$ planar buckle structures each without and with the presence of \emph{intermediate vimentin} filaments (generating stiffness) and placed 5 mathematically defined landmarks on them, leading to two groups in $\Sigma_2^5$ as detailed in \cite{tran2021reverse}. Figure \ref{fig:shapes} shows typical buckle structures. 
 
 We compare the two-sample test based on suitable $\chi^2$-quantiles in tangent space with the test based on a suitable bootstrap procedure amending for FSS, cf. \cite{HundrieserEltznerHuckemann2020, EltznerHundrieserHuckemann2021}. In order to assess the effective level of the test we have generated a control sample of another $20$ buckles in the presence of vimentin filaments. As clearly visible in Table \ref{table:test}, the presence of FSS results in an higher level size of the quantile-based test and a reduced power, thus making it useless for further evaluation. In contrast the bootstrap-based test keeps the level, making its rejection of equality of buckling with and without vimentin credible.
 
\begin{figure}[h!]
\centering
\includegraphics[width=0.9\textwidth]{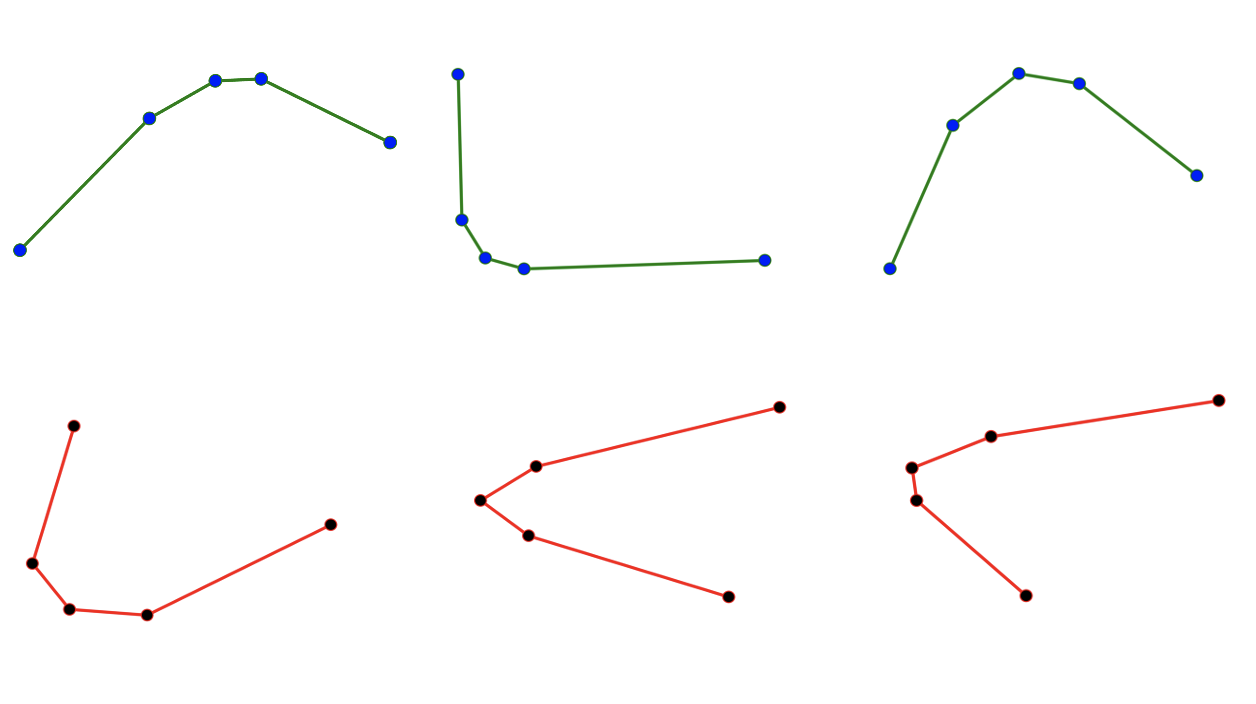}
\caption{\it Microtubule buckle structures with 5 landmarks. Upper row: without vimentin filaments. Lower row: in the presence of vimentin filaments (generating stiffness).
}
\label{fig:shapes}
\end{figure}
\begin{table}[ht]
\centering	
\caption{\it Reporting fraction of rejected hypothesis of equality of means using 100 simulations of two-sample test at nominal level $\alpha = 0.05$ based on quantiles (top row) and a suitable bootstrap procedure (bottom row) under equality (left column) and under inequality (right column).}

\vspace*{0.5cm}
\begin{tabular}{ c||c|c|c| } 
 \hline
 &Both with vimentin  & One with and the other without vimentin  \\ 
  \hline
 \hline
Quantile based& 0.11 & 0.37 \\ 
 \hline
Bootstrap based & 0.03 & 0.84  \\ 
 \hline

\end{tabular}

\label{table:test}
\end{table}



\end{document}